\newcommand{\inpro}[2]{\left\langle{#1},{#2}\right\rangle}
\newcommand{\be}{\begin{eqnarray*}}
	\newcommand{\en}{\end{eqnarray*}}
\newcommand{\bes}{\begin{eqnarray}}
\newcommand{\ens}{\end{eqnarray}}
\newcommand{\lf}{\left}
\newcommand{\rt}{\right}
\def  \kg {L^\infty\lf(0,T; 		 \mathcal{Z}_{\gamma, TA_1 }(\Omega)\rt)}
\def\nn{\nonumber}
\newtheorem{theorem}{Theorem}[section]
\newtheorem{corollary}{Corollary}[section]
\newtheorem{lemma}{Lemma}[section]
\def\bq{\begin{equation}}
\def\eq{\end{equation}}
\def\bqq{\begin{eqnarray*}}
	\def\eqq{\end{eqnarray*}}
\def\nn{\nonumber}
\title{\bf A random regularized approximate  solution of   the  inverse  problem for  the Burgers' equation }
\author{ Erkan Nane $^1$ \footnote{E. Nane: \url{ezn0001@auburn.edu}},   Nguyen Hoang Tuan$^2$, Nguyen  Huy Tuan $^2$ \footnote{Corresponding author: \url{nguyenhuytuan@tdt.edu.vn }},  \\\\
	\small $^1$ Department of Mathematics and Statistics, Auburn University, Auburn, USA \\
\small $^{2}$ Applied Analysis Research Group,
Faculty of Mathematics and Statistics,\\
\small Ton Duc Thang University, Ho Chi Minh City, Vietnam
 \\ \\
}
\begin{document}
	\date{}
	\maketitle
	
	\begin{abstract}
		In this paper, we  find a  regularized approximate solution  for   an   inverse  problem for the Burgers' equation. The solution of  the inverse problem for the  Burgers' equation  is ill-posed, i.e., the solution  does not depend continuously on the data. The approximate solution is the solution of a regularized equation with randomly perturbed coefficients and randomly perturbed final value and source functions.
		   To find the regularized solution, we use the modified quasi-reversibility method  associated with the truncated expansion method  with nonparametric
		regression. We also investigate the convergence
		rate.
	\end{abstract}


\section{Introduction}

	In this work, we consider the backward in time problem for  { \bf 1-D Burgers' equation}
	\begin{equation}
		\label{problem4444}
		\left\{\begin{array}{l l l}
			{\bf u}_t -(A(x,t)	{\bf u}_x)_x & =	{\bf u} 	{\bf u}_x+G(x,t), & \qquad (x,t) \in \Omega\times (0,T),\\
			{\bf u}(x,t)&=0, & \qquad x \in \partial {\Omega},\\
			{\bf u}(x,T) & = H(x), & \qquad x \in {\Omega},
		\end{array}
		\right.
	\end{equation}	
	where $\Omega =(0,\pi)$.
	The Burgers equation is a fundamental partial differential equation occurring in various areas of applied mathematics, such as fluid mechanics, nonlinear acoustics, gas dynamics, traffic flow \cite{Ku}.

{


One  can see that the term  $(A(x,t)    {\bf u}_x)_x $ is $\Delta {\bf u}={\bf u}_{xx}$ if  $A=1$.
However,  one  can not use spectral methods to study the operator  $(A(x,t)    {\bf u}_x)_x$
So, the problem is more difficult.
The second observation is that for the equation   ${\bf u}_t- (A(x,t)    {\bf u}_x)_x = f({\bf u}, {\bf u}_x)$
when  $A(x,t)$ is deterministic  and $f({\bf u}, {\bf u}_x) = f({\bf u})$, the problem is a consequence of Theorem 4.1 in our recent paper \cite{kirane-nane-tuan-1}.
However, if   $ A(x,t)$ is randomly  perturbed and $f({\bf u}, {\bf u}_x)$ depends on ${\bf u}$ and  ${\bf u}_x$ then the problem is more challenging.


Until now, the deterministic  Burgers' equation with the randomly perturbed case have not been  studied.
Hence, the paper is the  first study of  Burgers' equation  backward in time. The  inclusion of the gradient term  in  $u u_x$ in the right hand side  of the Burgers' equation    makes the Burgers' equation more difficult to  study. We need  to find  an approximate function for ${\bf u} {\bf u} _x$.
This task  is nontrivial.

This paper is a continuation of our  study of  backward problems  in the two recent papers \cite{kirane-nane-tuan-1, kirane-nane-tuan-2}. In those papers the equations did not have  random coefficients in the main equations.
The paper \cite{kirane-nane-tuan-1} does not  consider the random  operator. The paper \cite{kirane-nane-tuan-2} considers the  simple coefficient  $A(x,t)=A(t)$ and the source function is  ${\bf u}-{\bf u}^3$.
Hence, one can see that the Burgers' equation considered here  is more difficult since the gradient term in the right hand side and the coefficient $A(x,t)$  depends on   both $x$ and $t$.

}

It is known that the backward  problem mentioned above is ill-posed in general \cite{Ku}, i.e., a solution does not always exist. When the solution exists, the solution does not depend continuously on the given initial data. In fact, from a small noise
of a physical measurement, the corresponding solution may have a large error. This
makes the numerical computation  troublesome. Hence a regularization is required. It is well-known that  there are some difficulties to  study  the nonlocal Burger's equation.
First, by the  given form of  coefficient $A(x,t)$ in the main equation \eqref{problem4444},  the solution of Problem \eqref{problem4444} can not be transformed into a nonlinear integral equation. Hence, classical spectral method cannot  be  applied.
The second  thing that  makes the Burger's equation more difficult to study is the gradient term ${\bf u}_x$ in the right hand side.  Until now, although there are limited  number of works  on  the backward problem  for Burgers' equation \cite{Carasso,Hao},  there are no results for regularizing the problem.

As is well-known, measurements always are given at a discrete set of points
and contain errors. These errors may be generated from controllable sources or uncontrollable sources. In the first case, the error is often deterministic.  If the errors are generated from uncontrollable sources as wind, rain, humidity, etc., then the model is random. Methods used for the deterministic cases cannot be  applied directly to the random  case.

In this paper,  we consider the following model  as follows
\begin{equation}
\widetilde H(x_k)=H(x_k)+\sigma_k\epsilon_k, \quad  \widetilde G_k(t)= G(x_k,t)+ \vartheta\xi_{k}(t),\quad \text{for} \quad  k=\overline{1,n},  \label{noise}
\end{equation}
and
\begin{equation}
\widetilde A_k(t)= A(x_k,t)+  { \overline \vartheta} { \xi}_{k}(t),\quad \text{for} \quad  k=\overline{1,n}.  \label{noise1}
\end{equation}
where $x_k = \pi\dfrac{2k-1}{2n}$
and  $\epsilon_k$ are unknown independent random errors. Moreover,   $\epsilon_{k}\sim \mathcal{N}(0,1)$,  and $\sigma_{k} ,~\vartheta, \overline \vartheta$ are unknown positive constants which are bounded by a positive constant $V_{max}$, i.e., $0 \leq \sigma_{k} < V_{\text{max}}$ for all $k=1,\cdots, n$.  $\xi_{k}(t)$ are Brownian motions. The noises $ \epsilon_{k}, \xi_{k}(t)$ are mutually independent.  	Our task is reconstructing the initial data ${ u} (x, 0)$.

We next want to mention about the organization of the paper and  our methods in this paper.  We prove some preliminary results in section \ref{section2}. We state and prove our main result in section \ref{section3-main}.  The existence and uniqueness of   solution of equation \eqref{problem4444} is an open problem, and we do not investigate this problem here. For inverse problem, we assume that the solution of  the Burgers'
equation \eqref{problem4444} exists. In this case  its  solution  is not stable. In this paper  we establish an approximation of the  backward in time problem for  {  1-D Burgers' equation} \eqref{problem4444} with the solution of a regularized equation with randomly perturbed equation \eqref{reguburger}. The random perturbation in equation \eqref{reguburger} is explained in  equations \eqref{noise}, \eqref{noise1}, \eqref{reguburger} and \eqref{randomly-perturbed-operator}.

\section{Some Notation}\label{section2}
We first  introduce notation, and then  state the first set of our  main results in this paper.
We define fractional powers of the Neummann-Laplacian.
{
	\begin{equation}
		Af(x):= -\Delta f(x)=-\frac{\partial^2f(x)}{\partial x^2}.
	\end{equation}
	Since $A$ is a linear densely defined self-adjoint and positive definite
	elliptic operator on the connected bounded domain  $\Omega=(0,\pi) $ with
	{Dirichlet  boundary condition}, the eigenvalues of $A$ satisfy
	\[
	\lambda_0=0 < \lambda_1 \le \lambda_2 \le \lambda_3 \le \cdots \le \lambda_p\le \cdots
	\]
	with  $\lambda_p=p^2  \to \infty $ as $p \to \infty$.
	The corresponding eigenfunctions are denoted respectively by
	{$\varphi_{p}(x)=\sqrt{\frac{2}{\pi}}\sin(px) $.}
	Thus the eigenpairs $(\lambda_p,\phi_p)$,
	$p=0,1,2,...$, satisfy
	\[
	\begin{cases}
	A \varphi_{p}(x)
	=
	-\lambda_p \phi_{p}(x),
	\quad & x \in \Omega \\
	\partial_x\phi_{p}(x)
	=
	0,
	\quad & x\in \partial \Omega.
	\end{cases}
	\]
	The functions $\varphi_p$ are normalized so that
	$\{\phi_{p}\}_{p=0}^\infty$ is an orthonormal basis of $L^2(\Omega)$.\\
	Defining
	\[
	H^{\gamma}(\Omega)
	=
	\Bigg\{ v \in L^{2}(\Omega) : \sum\limits_{p=0}^{\infty}
	\lambda_{p}^{2\gamma} |\inpro{v}{\phi_{p}}|^{2} <  + \infty \Bigg\},
	\]
	where $\inpro{\cdot}{\cdot}$ is the inner product in $L^{2}(\Omega)$, then
	$H^{\gamma}(\Omega)$ is a Hilbert space equipped with norm
	\[
	\|v\|_{H^{\gamma}(\Omega)}
	=
	\left(\sum\limits_{p=1}^{\infty}
	\lambda_{p}^{2\gamma} |\inpro{v}{\phi_{p}}|^{2}\right)^{1/2}.
	\]
	First, we  state  following Lemmas that will be used in this paper
	
	\begin{theorem}[Theorem 2.1 in \cite{kirane-nane-tuan-1}]  \label{theorem2.1}
		Define the set $	\mathcal{W}_{   \beta_{ n}}$ for any $ n \in \mathbb{N}$
		\begin{equation}
			\mathcal{W}_{   \beta_{ n}} = \Big\{ { p}  \in \mathbb{N}  : |{ p}| \le \sqrt{ \beta_{ n} } \Big\}
		\end{equation}
		where $\beta_{ n}$  satisfies
		\begin{equation*}
			\lim_{|{ n}| \to +\infty} \beta_{ n}=+\infty.
		\end{equation*}
		For a given ${ n}$ and $\beta_{ n}$ we define  functions that are approximating  $H, G$ as follows
		\begin{equation}
			\widehat H_{\beta_{ n}}({ x}) = \sum_{{ p} \in  	\mathcal{W}_{  \beta_{ n}}   } \Bigg[\frac{\pi}{n} \sum_{k=1}^n \widetilde D_{k} \psi_{ p}	( x_k) \Bigg] \psi_{ p}({ x}),~~
			\widehat G_{\beta_{ n}}({ x},t) = \sum_{{ p} \in  	\mathcal{W}_{  \beta_{ n}}   } \Bigg[\frac{\pi}{n} \sum_{k=1}^n \widetilde G_{k}(t) \psi_{ p}	( x_k) \Bigg] \psi_{ p}({ x}).
		\end{equation}
	 Let us choose $\mu_0 > \frac{1}{2}$.
		If  $H \in \mathcal{H}^{\mu_0}(\Omega) $ and $G \in L^\infty (0,T;\mathcal{H}^{\mu_0}(\Omega) )$ then the following estimates hold
		\begin{eqnarray}
			\begin{aligned}
				&	{\bf E}\Big\| \widehat H_{\beta_{ n}} -H  \Big\|^2_{L^2(\Omega)} \le  \overline C (\mu_0, H)  \sqrt{ \beta_{ n} }  n^{-4\mu_0} +{ 4\beta_{ n}^{-\mu_0}} \Big\| H \Big\|^2_{\mathcal{H}^{\mu_0}(\Omega) },\nn\\
				&	{\bf E}\Big\| \widehat G_{\beta_{\bf n}}(.,t) -G(.,t)  \Big\|^2_{L^\infty(0,T; L^2(\Omega))} \le  \overline C (\mu_0, G)  \sqrt{ \beta_{ n} }  n^{-4\mu} +{ 4\beta_{ n}^{-\mu_0}} \Big\| G \Big\|^2_{L^\infty (0,T;\mathcal{H}^{\mu_0}(\Omega) ) }	,
			\end{aligned}
		\end{eqnarray}	
		where
		\begin{equation*}
			\overline C (\mu_0, H)= 8\pi  V_{max}^2  \frac{2 \pi^{1/2}}{  \Gamma(1/2)} + \frac{ 16 \mathcal{C}^2 \mu_0 \pi^{1/2}}{  \Gamma(1/2)}  \Big\| H \Big\|^2_{\mathcal{H}^{\mu_0}(\Omega) }.
		\end{equation*}
		and
			\begin{equation*}
			\overline C (\mu_0, G)= 8\pi  V_{max}^2  \frac{2 \pi^{1/2}}{  \Gamma(1/2)} + \frac{ 16 \mathcal{C}^2 \mu \pi^{1/2}}{  \Gamma(1/2)}  \Big\| G \Big\|^2_{L^\infty (0,T;\mathcal{H}^{\mu_0}(\Omega) ) }.
			\end{equation*}		
	\end{theorem}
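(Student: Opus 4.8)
The plan is a bias--variance decomposition of the $L^2(\Omega)$-error into three pieces: a centered stochastic fluctuation, a deterministic quadrature (aliasing) error, and a deterministic truncation error. Write $\psi_p(x)=\sqrt{2/\pi}\sin(px)$, $V_n=\operatorname{span}\{\psi_p:p\in\mathcal W_{\beta_n}\}$, let $P_{\beta_n}H=\sum_{p\in\mathcal W_{\beta_n}}\inpro{H}{\psi_p}\psi_p$ be the exact $L^2(\Omega)$-projection onto $V_n$, and let $\overline H_{\beta_n}:=\mathbf E\,\widehat H_{\beta_n}$, which is $\widehat H_{\beta_n}$ with each $\widetilde D_k$ replaced by its mean $H(x_k)$ (since $\mathbf E\,\epsilon_k=0$), so that $\widehat H_{\beta_n}-\overline H_{\beta_n}$ has coefficients $\frac\pi n\sum_k\sigma_k\epsilon_k\psi_p(x_k)$. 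Since $\widehat H_{\beta_n},\overline H_{\beta_n},P_{\beta_n}H\in V_n$ while $H-P_{\beta_n}H\perp V_n$, and $\widehat H_{\beta_n}-\overline H_{\beta_n}$ is centered, two applications of Pythagoras give
\[
\mathbf E\,\no{\widehat H_{\beta_n}-H}_{L^2(\Omega)}^2
=\mathbf E\,\no{\widehat H_{\beta_n}-\overline H_{\beta_n}}_{L^2(\Omega)}^2
+\no{\overline H_{\beta_n}-P_{\beta_n}H}_{L^2(\Omega)}^2
+\no{P_{\beta_n}H-H}_{L^2(\Omega)}^2 .
\]
The first two terms will assemble into the $\overline C(\mu_0,H)\sqrt{\beta_n}\,n^{-4\mu_0}$ term and the last into $4\beta_n^{-\mu_0}\|H\|^2_{\mathcal H^{\mu_0}}$.

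\emph{Stochastic and truncation terms.} By Parseval the stochastic term equals $\sum_{p\in\mathcal W_{\beta_n}}\mathbf E\big(\frac\pi n\sum_{k}\sigma_k\epsilon_k\psi_p(x_k)\big)^2=\sum_{p\in\mathcal W_{\beta_n}}\frac{\pi^2}{n^2}\sum_k\sigma_k^2\psi_p(x_k)^2$, using independence of the $\epsilon_k$ and $\mathbf E\,\epsilon_k^2=1$; bounding $\sigma_k\le V_{\max}$, $|\psi_p|\le\sqrt{2/\pi}$ and $|\mathcal W_{\beta_n}|\le\sqrt{\beta_n}$ makes this of order $V_{\max}^2\sqrt{\beta_n}\,n^{-1}$, which is the source of the $V_{\max}^2$ part of $\overline C(\mu_0,H)$. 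The truncation term is immediate: $\no{P_{\beta_n}H-H}_{L^2(\Omega)}^2=\sum_{\lambda_p>\beta_n}|\inpro{H}{\psi_p}|^2=\sum_{\lambda_p>\beta_n}\lambda_p^{-\mu_0}\lambda_p^{\mu_0}|\inpro{H}{\psi_p}|^2\le\beta_n^{-\mu_0}\no{H}_{\mathcal H^{\mu_0}(\Omega)}^2$, since $\lambda_p=p^2>\beta_n$ on the tail.

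\emph{Aliasing term (the crux).} With $x_k=\pi(2k-1)/(2n)$, a roots-of-unity computation gives $\sum_{k=1}^n\cos(mx_k)=0$ whenever $2n\nmid m$ and $\pm n$ when $2n\mid m$; applied to $2\sin(px_k)\sin(qx_k)=\cos((p-q)x_k)-\cos((p+q)x_k)$ this yields $\frac\pi n\sum_k\psi_p(x_k)\psi_q(x_k)=\delta_{pq}$ for $1\le p,q$ with $p+q<2n$ and $|p-q|<2n$, and in general $\delta_{pq}$ up to contributions from indices $q\equiv\pm p\pmod{2n}$. Expanding $H=\sum_{q\ge1}\inpro{H}{\psi_q}\psi_q$ pointwise --- legitimate because $\mu_0>\tfrac12$ embeds $\mathcal H^{\mu_0}(\Omega)$ into $C(\overline\Omega)$ --- we obtain, for $p\in\mathcal W_{\beta_n}$,
\[
e_p:=\frac\pi n\sum_{k=1}^n H(x_k)\psi_p(x_k)-\inpro{H}{\psi_p}
=\sum_{\substack{q\ge1,\ q\ne p\\ 2n\mid(q-p)\ \text{or}\ 2n\mid(q+p)}}\pm\,\inpro{H}{\psi_q}.
\]
In the relevant regime $\sqrt{\beta_n}\le n$ every such $q$ satisfies $q\ge 2n-p\ge n$, hence lies in $\{2jn\pm p:j\ge1\}$ with $\lambda_q=q^2\ge n^2$. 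Cauchy--Schwarz over this set then gives $|e_p|\le\big(\sum_{q\,\text{aliased}}\lambda_q^{-\mu_0}\big)^{1/2}\no{H}_{\mathcal H^{\mu_0}(\Omega)}\le\big(2\sum_{j\ge1}(jn)^{-2\mu_0}\big)^{1/2}\no{H}_{\mathcal H^{\mu_0}(\Omega)}=\mathcal C\,n^{-\mu_0}\no{H}_{\mathcal H^{\mu_0}(\Omega)}$, the series converging precisely because $\mu_0>\tfrac12$ (the second, and binding, use of the hypothesis, and what fixes $\mathcal C$). Summing, $\no{\overline H_{\beta_n}-P_{\beta_n}H}_{L^2(\Omega)}^2=\sum_{p\in\mathcal W_{\beta_n}}|e_p|^2\le\mathcal C^2\sqrt{\beta_n}\,n^{-2\mu_0}\no{H}_{\mathcal H^{\mu_0}(\Omega)}^2$; together with the stochastic bound and after collecting constants this furnishes the first term of the claimed estimate.

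\emph{The bound for $G$, and the main obstacle.} For $G$ one runs the identical three-term decomposition on $G(\cdot,t)$ for each fixed $t$; the aliasing and truncation pieces are verbatim with $\no{G(\cdot,t)}_{\mathcal H^{\mu_0}}\le\no{G}_{L^\infty(0,T;\mathcal H^{\mu_0})}$ and, being uniform in $t$, they pass to the $L^\infty(0,T)$-norm. The stochastic piece now has Brownian noise $\vartheta\xi_k(t)$: $t\mapsto\frac\pi n\sum_k\vartheta\xi_k(t)\psi_p(x_k)$ is a continuous $L^2$-martingale with terminal second moment of order $\vartheta^2 T\,n^{-1}$, so Doob's maximal inequality controls $\mathbf E\sup_{t\le T}(\cdot)^2$ up to a factor $4$, and summing over $\mathcal W_{\beta_n}$ recovers the $\sqrt{\beta_n}$ factor; this gives the stated bound with $\overline C(\mu_0,G)$ in place of $\overline C(\mu_0,H)$. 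The one genuinely delicate step is the aliasing analysis: establishing the discrete orthogonality of $\{\psi_p(x_k)\}_{k=1}^n$ and, above all, identifying the aliased frequencies as $\{2jn\pm p\}$ so that only high modes $q\ge n$ feed into $e_p$ --- it is this separation of scales, combined with $\mathcal H^{\mu_0}$-regularity and $\mu_0>\tfrac12$, that produces the per-mode decay $n^{-\mu_0}$. The stochastic and truncation estimates are routine by comparison.
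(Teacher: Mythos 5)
First, a point of reference: the paper you are working from does not prove this statement at all --- it is imported verbatim as Theorem 2.1 of \cite{kirane-nane-tuan-1}, so there is no in-paper proof to compare against. Your overall plan --- splitting $\widehat H_{\beta_n}-H$ into a centered stochastic fluctuation, a deterministic aliasing error coming from the quadrature $\frac{\pi}{n}\sum_k H(x_k)\psi_p(x_k)$, and a spectral truncation tail, using the discrete orthogonality of $\{\sin(p x_k)\}$ at the midpoint nodes $x_k=\pi(2k-1)/(2n)$ and Cauchy--Schwarz over the aliased frequencies $q=2jn\pm p$, plus a Doob maximal inequality for the Brownian noise in the $G$-estimate --- is exactly the standard argument behind results of this type (Eubank-style nonparametric regression with spectral cutoff, as in \cite{Trong,Tuan1} and the cited reference), so the route is the right one in outline.

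There is, however, a genuine gap at the point where you claim the pieces ``assemble'' into the stated bound. Your own computation of the variance term gives $\sum_{p\in\mathcal W_{\beta_n}}\frac{\pi^2}{n^2}\sum_{k}\sigma_k^2\psi_p(x_k)^2\le 2\pi V_{max}^2\sqrt{\beta_n}\,n^{-1}$, a contribution of order $\sqrt{\beta_n}\,n^{-1}$, yet you assert that this is ``the source of the $V_{max}^2$ part of $\overline C(\mu_0,H)$,'' i.e.\ that it sits under the envelope $\overline C(\mu_0,H)\sqrt{\beta_n}\,n^{-4\mu_0}$. Since $\mu_0>\tfrac12$ forces $4\mu_0>2$, one has $n^{-1}\gg n^{-4\mu_0}$, so the variance term is strictly larger than the term it is supposed to be absorbed into; and no sharpening can rescue this, because for $H=0$ and $\sigma_k\equiv\sigma$ the same discrete orthogonality you invoke shows the variance is exactly of order $\sqrt{\beta_n}/n$. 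You must either carry the variance as a separate $O(\sqrt{\beta_n}\,n^{-1})$ term (which changes the inequality being proved) or flag that the exponent in the first term of the target statement cannot be as written; silently asserting the assembly is the step that fails. A second, smaller slip: with the paper's convention $\|v\|^2_{\mathcal H^{\mu_0}}=\sum_p\lambda_p^{2\mu_0}|\langle v,\psi_p\rangle|^2$ and $\lambda_p=p^2$, the Cauchy--Schwarz over the aliased set should carry the weight $\lambda_q^{-2\mu_0}=q^{-4\mu_0}$, yielding $|e_p|^2\lesssim n^{-4\mu_0}$ per mode (which is what matches the theorem's exponent and only requires $4\mu_0>1$); your choice of weight $\lambda_q^{-\mu_0}$ produces $|e_p|^2\lesssim n^{-2\mu_0}$ and hence an aliasing term $\sqrt{\beta_n}\,n^{-2\mu_0}$ that does not reproduce the claimed $\sqrt{\beta_n}\,n^{-4\mu_0}$ either.
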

	
	\begin{corollary}[Corollary 2.1 in \cite{kirane-nane-tuan-1}]  \label{corollary2.1}
		Let  $H, G$ be as in  Theorem \eqref{theorem2.1}. Then  the term
		$ {\bf E}	\Big\| \widehat H_{\beta_{ n}}-H \Big\|_{L^2(\Omega)}^2+T {\bf E}	\Big\| \widehat G_{\beta_{ n}}-G \Big\|_{L^\infty(0,T;L^2(\Omega))}^2$
		is of order
		\begin{equation*}  \label{veryimportant}
			\max \Bigg( \sqrt{ \beta_{ n} }  n^{-4\mu_0} , ~\beta_{ n}^{-\mu_0}  \Bigg).
		\end{equation*}
	\end{corollary}

	\begin{lemma}
		Define the following space of functions
		\begin{equation} \label{definitionspace}
		\mathcal{Z}_{\gamma, B }(\Omega):=\Bigg\{ f \in L^2(\Omega),  \sum_{{ p} \in \mathbb{N}  }   { p}^{2+2\gamma} e^{2B { p}^{2}}  \big\langle f,\psi_{ p}\big\rangle_{L^2(\Omega)}^2 <+\infty \Bigg\},
		\end{equation}
		for any $\gamma \ge 0$ and $B\geq 0$. Define   also the operator  ${\bf P}= A_1 \Delta $  and
		$	{\bf P}_{\rho_{ n}}$   is defined as follows
		\begin{align}
		{\bf P}_{\rho_{ n}}(v)  &=A_1 \sum_{ { p} \le   \sqrt{\frac{\rho_{\bf n}} {  A_1 }  }  }^\infty       { p}^{2{}} \big\langle v(x),\psi_{ p}\big\rangle_{L^2(\Omega)} \psi_{ p},
		\end{align}
		for any function $v \in L^2(\Omega)$.  Then for  any $v \in L^2(\Omega)$
		\begin{align}
		\|{ \bf P}_{\rho_{\bf n}}(v)\|_{L^2(\Omega)} \le \rho_{ n} \|v\|_{L^2(\Omega)},  \label{estimateP1}
		\end{align}
		and for $v \in 		\mathcal{Z}_{\gamma, TA_1 }(\Omega) $ then
		\begin{align}
		\| {\bf P}v-{\bf P}_{\rho_{ n}}v\|_{L^2(\Omega)} \le  A_1 \rho_{ n}^{-\gamma}  e^{-T \rho_{ n}  }   \|v\|_{ \mathcal{Z}_{\gamma, TA_1 }(\Omega)  }. \label{estimateP2}
		\end{align}
		
	\end{lemma}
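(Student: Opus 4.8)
The plan is to move to the orthonormal basis $\{\psi_p\}$ and reduce both inequalities, via Parseval's identity, to elementary bounds on the corresponding Fourier multipliers.

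For \eqref{estimateP1}, expand $v=\sum_p\langle v,\psi_p\rangle_{L^2(\Omega)}\psi_p$, so that $\mathbf{P}_{\rho_n}(v)=A_1\sum_{p\le\sqrt{\rho_n/A_1}}p^2\langle v,\psi_p\rangle_{L^2(\Omega)}\psi_p$ and
\begin{equation*}
\|\mathbf{P}_{\rho_n}(v)\|_{L^2(\Omega)}^2=A_1^2\sum_{p\le\sqrt{\rho_n/A_1}}p^4\,\langle v,\psi_p\rangle_{L^2(\Omega)}^2 .
\end{equation*}
Each retained index satisfies $A_1p^2\le\rho_n$, hence $A_1^2p^4\le\rho_n^2$, and the right-hand side is bounded by $\rho_n^2\sum_p\langle v,\psi_p\rangle_{L^2(\Omega)}^2=\rho_n^2\|v\|_{L^2(\Omega)}^2$; taking square roots gives \eqref{estimateP1}. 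This step is immediate.

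For \eqref{estimateP2}, note that $\mathbf{P}v-\mathbf{P}_{\rho_n}v$ is precisely the high-frequency tail $A_1\sum_{p>\sqrt{\rho_n/A_1}}p^2\langle v,\psi_p\rangle_{L^2(\Omega)}\psi_p$, so that $\|\mathbf{P}v-\mathbf{P}_{\rho_n}v\|_{L^2(\Omega)}^2=A_1^2\sum_{p>\sqrt{\rho_n/A_1}}p^4\langle v,\psi_p\rangle_{L^2(\Omega)}^2$. I would then insert and extract the weight that defines $\mathcal{Z}_{\gamma,TA_1}(\Omega)$, writing
\begin{equation*}
A_1^2p^4\,\langle v,\psi_p\rangle_{L^2(\Omega)}^2=\Big(A_1^2\,p^{2-2\gamma}\,e^{-2TA_1p^2}\Big)\; p^{2+2\gamma}\,e^{2TA_1p^2}\langle v,\psi_p\rangle_{L^2(\Omega)}^2 ,
\end{equation*}
and estimate the scalar prefactor uniformly over the tail $A_1p^2>\rho_n$. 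Substituting $s=A_1p^2$, this prefactor equals $A_1^{1+\gamma}\,s^{1-\gamma}e^{-2Ts}$; since $s\mapsto s^{1-\gamma}e^{-2Ts}$ is decreasing once $2Ts\ge 1-\gamma$ — automatic for $\gamma\ge1$, and valid on the whole tail in the regularization regime where $\rho_n$ is large — its supremum over $s>\rho_n$ is attained at $s=\rho_n$, which (together with $A_1p^2>\rho_n$, i.e. $p^{-2\gamma}<A_1^{\gamma}\rho_n^{-\gamma}$, to collect the algebraic powers) yields the pointwise bound $A_1^2p^{2-2\gamma}e^{-2TA_1p^2}\le A_1^2\rho_n^{-2\gamma}e^{-2T\rho_n}$. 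Summing over $p$ then leaves exactly $\|v\|_{\mathcal{Z}_{\gamma,TA_1}(\Omega)}^2$ on the right, and a square root gives \eqref{estimateP2}.

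The only genuinely delicate point is this last uniform multiplier bound: one must check that the algebraic factor $p^{2-2\gamma}$ does not spoil the exponential gain $e^{-2T(A_1p^2-\rho_n)}$ carried by the weight — this is exactly what the monotonicity of $s\mapsto s^{1-\gamma}e^{-2Ts}$ beyond $s=(1-\gamma)_+/(2T)$ guarantees, and it is where membership in $\mathcal{Z}_{\gamma,TA_1}(\Omega)$ (rather than merely in $L^2$) is essential. Everything else is bookkeeping with Parseval's identity.
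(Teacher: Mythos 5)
Your proof of \eqref{estimateP1} is correct and is exactly the paper's argument (Parseval plus $A_1p^2\le\rho_n$ on the retained modes). For \eqref{estimateP2}, however, there is a genuine gap: the pointwise multiplier bound you assert, $A_1^2p^{2-2\gamma}e^{-2TA_1p^2}\le A_1^2\rho_n^{-2\gamma}e^{-2T\rho_n}$ for $A_1p^2>\rho_n$, is false. Your own substitution already shows this: the prefactor equals $A_1^{1+\gamma}s^{1-\gamma}e^{-2Ts}$ with $s=A_1p^2$, and its supremum over $s>\rho_n$ is $A_1^{1+\gamma}\rho_n^{1-\gamma}e^{-2T\rho_n}$, which exceeds the claimed bound by the factor $A_1^{\gamma-1}\rho_n^{1+\gamma}\to\infty$. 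The step ``together with $p^{-2\gamma}<A_1^{\gamma}\rho_n^{-\gamma}$ to collect the algebraic powers'' spends the constraint $A_1p^2>\rho_n$ a second time: once the factor $p^{2}$ left over from $A_1^2p^4/(p^{2+2\gamma})$ has been absorbed into the exponential to produce $\rho_n^{1-\gamma}e^{-2T\rho_n}$, there is no additional $p^{-2\gamma}$ available to convert $\rho_n^{1-\gamma}$ into $\rho_n^{-2\gamma}$. With the weight $p^{2+2\gamma}e^{2TA_1p^2}$ as written in \eqref{definitionspace}, the best uniform bound on the tail is of order $\rho_n^{1-\gamma}e^{-2T\rho_n}$, i.e.\ you lose a factor $\rho_n^{1+\gamma}$ relative to \eqref{estimateP2}.

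The root cause is an inconsistency in the statement itself, which you inherited by working faithfully from \eqref{definitionspace}. The paper's proof factors $A_1^2p^4=\big(A_1^2p^{-4\gamma}e^{-2TA_1p^2}\big)\,p^{4+4\gamma}e^{2TA_1p^2}$ and then identifies $\sum_p p^{4+4\gamma}e^{2TA_1p^2}\langle v,\psi_p\rangle^2_{L^2(\Omega)}$ with $\|v\|^2_{\mathcal{Z}_{\gamma,TA_1}(\Omega)}$; that is, it tacitly uses the weight $p^{4+4\gamma}$ rather than the $p^{2+2\gamma}$ appearing in \eqref{definitionspace}. With that stronger weight the prefactor $A_1^2p^{-4\gamma}e^{-2TA_1p^2}$ is a product of two factors that are each decreasing in $p$, so no monotonicity analysis is needed and one gets $A_1^{2+2\gamma}\rho_n^{-2\gamma}e^{-2T\rho_n}$ at once (the paper also silently drops the harmless $A_1^{2\gamma}$). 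So either read the space with exponent $4+4\gamma$, in which case your Parseval scheme closes immediately, or keep the printed definition and accept the weaker rate $\rho_n^{1-\gamma}$; as written, your final inequality does not hold.
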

	\begin{proof}
		First, for any $v \in L^2(\Omega)$, we have
		\begin{align}
		\|{\bf P}_{\rho_{\bf n}}(v)\|_{L^2(\Omega)}^2 &= A_1^2 \sum_{ { p} \le   \sqrt{\frac{\rho_{n}} {  A_1 }  }  }^\infty    { p}^{4} \big\langle v(x),\psi_{ p}\big\rangle^2_{L^2(\Omega)} \nn\\
		&\le \rho_{ n}^2   \sum_{ { p} \le   \sqrt{\frac{\rho_{ n}} {  A_1 }  }  }^\infty \big\langle v(x),\psi_{ p}\big\rangle^2_{L^2(\Omega)}= \rho_{ n}^2  \|v\|_{L^2(\Omega)}^2,
		\end{align}
		and
		\begin{align}
		\| {\bf P}v -{\bf P}_{\rho_{ n}}(v)\|_{L^2(\Omega)}^2 &= A_1^2 \sum_{ { p} >   \sqrt{\frac{\rho_{ n}} {  A_1 }  }  }^\infty    { p}^{-4\gamma}  e^{-2TA_1{| p|}^2 } { p}^{4+4\gamma} e^{2TA_1{ p}^2 }  \big\langle v(x),\psi_{ p}\big\rangle^2_{L^2(\Omega)}\nn\\
		& \le  A_1^2 \rho_{ n}^{-2\gamma}  e^{-2TA_1 \rho_{ n}  } \sum_{ { p} >   \sqrt{\frac{\rho_{ n}} {  A_1 }  }  }^\infty  { p}^{4+4\gamma} e^{2TA_1{ p}^2 }    \big\langle v(x),\psi_{ p}\big\rangle^2_{L^2(\Omega)}\nn\\
		&=  A_1^2 \rho_{ n}^{-2\gamma}  e^{-2T \rho_{ n}  }   \|v\|_{ \mathcal{Z}_{\gamma, TA_1 }(\Omega)  }^2.
		\end{align}
		
	\end{proof}

	Now, we can assume that $\widehat A_{\rho_n}(x,t), A(x,t) \le A_0$ for all $(x,t ) \in \Omega \times (0,T)$  and we choose $A_1>A_0$. We describe our regularized problem by defining the following problem
	\begin{align}  \label{reguburger}
	\left\{ \begin{gathered}
		\frac{\partial \widetilde U_{\rho_n, \beta_n}}{\partial t}-\left( \widehat A_{\beta_n}(x,t) \frac{\partial \widetilde U_{\rho_n, \beta_n}}{\partial x}\right)_x -{\bf P}\widetilde U_{\rho_n, \beta_n} +{\bf P}_{\rho_n}\widetilde U_{\rho_n,\beta_n} = 	\\
		\quad \quad \quad \quad \quad \quad  ={\bf \overline F^0}_{\widehat Q_n} \left( \widetilde U_{\rho_n,\beta_n}, \frac{\partial \widetilde U_{\rho_n,\beta_n}}{\partial x} \right) +\widehat G_{\rho_n}(x,t) ,~~0<t<T, \hfill \\
		{\widetilde U_{\rho_n,\beta_n} }(x,t)= 0,~~x \in \partial \Omega,\hfill\\
		{\widetilde U_{\rho_n,\beta_n} }(x,T)=\widehat H_{\beta_n}(x). \hfill\\
	\end{gathered}  \right.
	\end{align}
	Here $	\widehat A_{\beta_n}$ is defined by
	\begin{equation}\label{randomly-perturbed-operator}
		\widehat A_{\beta_n}(x,t) = \sum_{p \in 	\mathcal{W}_{  \beta_{ n}}  } \Bigg[\frac{\pi}{n } \sum_{k=1}^{n} 	\widetilde A_{k}(t)\psi_p	( x_{k}) \Bigg] \psi_p(x)
	\end{equation}
	where $\psi_p(x)= \sqrt{\frac{2}{\pi}} \sin(px)$. Noting as above, 		 the function $F({\bf u} , {\bf u}_x )= {\bf u} {\bf u_x}$ in the first equation of Problem \eqref{problem4444} is locally Lipschitz function and is approximated by	the  function ${\bf \overline F^0}_{\widehat Q_n} \left( \widetilde U_{\rho_n}, \frac{\partial \widetilde U_{\rho_n}}{\partial x} \right)$ in the first equation of Problem \eqref{reguburger} where
	\begin{align} \label{definitionF}
		{\bf \overline F^0}_{\widehat Q_n}(v,\widehat v) :=
		\begin{cases}
			\widehat Q_n^2, & \max\{v, \widehat v \} \in (\widehat Q_n,+\infty),\\
			v \widehat v , & \max\{v, \widehat v \} \in [-\widehat Q_n, \widehat Q_n],\\
			\widehat Q_n^2, & \max\{v, \widehat v \}  \in (-\infty,-\widehat Q_n).
		\end{cases}
	\end{align}	
	Here	the function $\widehat Q_n$ is increasing  function and  $\lim_{n \rightarrow +\infty} \widehat Q_n=+\infty$.
	For a sufficiently large $n >0$ such that $$ \widehat Q_n \geq \max \Big( \|\textbf{u}\|_{L^\infty((0,T);L^2(\Omega))} , \|\textbf{u}_x\|_{L^\infty((0,T);L^2(\Omega))}  \Big). $$
	We show  that $	{\bf \overline F^0}_{\widehat Q_n}$ is a  globally Lipschitz function by the following Lemma
	\begin{lemma} \label{lemma5.2}
		For any $ (v, \widehat v) \in \mathbb{R}^2, (w, \widehat w)\in \mathbb{R}^2 $, we obtain
		\begin{align}
			\Big| {\bf \overline F^0}_{\widehat Q_n}(v,\widehat v)- {\bf \overline F^0}_{\widehat Q_n}(w,\widehat w)\Big| \le \widehat Q_n \Big( |v-\widehat v|+ | w-\widehat w|\Big). \label{F}
		\end{align}
	\end{lemma}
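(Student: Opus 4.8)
The plan is to prove \eqref{F} by a case analysis driven by the piecewise definition of $\overline F^0_{\widehat Q_n}$ in \eqref{definitionF}: each of the two points $(v,\widehat v)$ and $(w,\widehat w)$ is classified by whether its relevant maximum lies in $(\widehat Q_n,\infty)$, in $[-\widehat Q_n,\widehat Q_n]$, or in $(-\infty,-\widehat Q_n)$. This yields nine combinations, but wherever a point's maximum leaves the band $[-\widehat Q_n,\widehat Q_n]$ the function value there is the constant $\widehat Q_n^2$, so only points inside the band produce a variable term. I would organize the argument so that the interior-interior combination is treated last.

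The outer combinations are quick. If both points lie in an outer region (both above, both below, or one of each), then $\overline F^0_{\widehat Q_n}(v,\widehat v)=\overline F^0_{\widehat Q_n}(w,\widehat w)=\widehat Q_n^2$ and the left-hand side of \eqref{F} is zero, so the bound holds trivially. If exactly one point, say $(w,\widehat w)$, is outside and $(v,\widehat v)$ is inside, the left-hand side becomes $|v\widehat v-\widehat Q_n^2|$, and I would try to extract a factor $\widehat Q_n$ using $|v|,|\widehat v|\le \widehat Q_n$ and the fact that $\max\{w,\widehat w\}$ has crossed the threshold $\widehat Q_n$, aiming to dominate it by the right-hand side; as I note below, this estimate runs into the same structural difficulty as the interior case.

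The essential case is interior-interior, where \eqref{F} reduces to the algebraic inequality $|v\widehat v-w\widehat w|\le \widehat Q_n(|v-\widehat v|+|w-\widehat w|)$ under $|v|,|\widehat v|,|w|,|\widehat w|\le \widehat Q_n$. The natural device is the telescoping identity $v\widehat v-w\widehat w=v(\widehat v-\widehat w)+\widehat w(v-w)$, which with the uniform bound $\widehat Q_n$ on the leading factors gives $|v\widehat v-w\widehat w|\le \widehat Q_n(|\widehat v-\widehat w|+|v-w|)$. This is exactly where I expect the decisive obstacle: the telescoping produces the \emph{corresponding-coordinate} differences $|v-w|$ and $|\widehat v-\widehat w|$, whereas the stated right-hand side is built from the \emph{internal} differences $|v-\widehat v|$ and $|w-\widehat w|$. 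The two are not comparable in general --- taking $v=\widehat v$ and $w=\widehat w$ collapses the stated right-hand side to zero while the left-hand side equals $|v^2-w^2|$ --- so the main step is not a computation but a reconciliation of these two difference structures. I would therefore either re-derive \eqref{F} with the corresponding-coordinate differences on the right, which is the genuinely provable globally Lipschitz statement, or locate an additional relation between the first and second arguments that the surrounding problem supplies (for example a link between $\widetilde U_{\rho_n,\beta_n}$ and its $x$-derivative) under which the internal differences would control the product.
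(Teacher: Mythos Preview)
Your diagnosis is correct, and in fact it exposes an error in the paper itself. The inequality \eqref{F} as written --- with the internal differences $|v-\widehat v|+|w-\widehat w|$ on the right --- is false, and your counterexample $v=\widehat v=a$, $w=\widehat w=b$ with $a\neq b$ in the band demonstrates this cleanly. The paper's own proof follows exactly the case split you outline and, in the interior--interior case (its Case~4), performs exactly the telescoping $v\widehat v-w\widehat w=(v-w)\widehat v+w(\widehat v-\widehat w)$ you predicted, obtaining $|\widehat v|\,|v-w|+|w|\,|\widehat v-\widehat w|$; it then jumps, without justification, to $\widehat Q_n(|v-\widehat v|+|w-\widehat w|)$. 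This last step is simply a slip: its own Case~2 ends with the corresponding-coordinate bound $\widehat Q_n(|w-v|+|\widehat w-\widehat v|)$, and the application of the lemma in the proof of Theorem~\ref{burger} also uses the corresponding-coordinate form $\widehat Q_n\big(\|\widetilde U_{\rho_n,\beta_n}-{\bf u}\|_{L^2}+\|\partial_x\widetilde U_{\rho_n,\beta_n}-{\bf u}_x\|_{L^2}\big)$.

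So your instinct --- that the provable and intended statement is
\[
\big|\,\overline F^0_{\widehat Q_n}(v,\widehat v)-\overline F^0_{\widehat Q_n}(w,\widehat w)\,\big|\le \widehat Q_n\big(|v-w|+|\widehat v-\widehat w|\big)
\]
--- is right, and the paper's argument (modulo the typo in the displayed conclusion of Case~4) establishes precisely this. There is no missing idea on your side; the ``obstacle'' you flagged is a genuine misprint in the lemma, not a subtlety you overlooked.
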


	\begin{proof}
		We divide the proof  into 5 cases:\\
		{\bf Case 1}.  If $\max\{v, \widehat v \} < -\widehat Q_n$ and  $\max\{w, \widehat w \} < -\widehat Q_n$ then it is easy to see that $	  {\bf \overline F^0}_{\widehat Q_n}(v,\widehat v)-  {\bf \overline F^0}_{\widehat Q_n}(w,\widehat w)=0$.\\
		{\bf Case 2}.  If $\max\{v, \widehat v \} < -\widehat Q_n \le   \max\{w, \widehat w \} \le  \widehat Q_n$ then using triangle inequality, we get
		\begin{align}
			\Big| 	 {\bf \overline F^0}_{\widehat Q_n}(v,\widehat v)-  {\bf \overline F^0}_{\widehat Q_n}(w,\widehat w)\Big| &= \Big| \widehat Q_n^2- w \widehat w  \Big|= \Big|  \widehat Q_n \left( \widehat Q_n+w \right)-w  \left( \widehat Q_n+\widehat w \right)\Big| \nn\\
			&\le \widehat Q_n  | w+\widehat  Q_n |+  |w|  | \widehat w+\widehat Q_n | \nn\\
			&\le \widehat Q_n  \Big(  | w+\widehat  Q_n |  +  | \widehat w+\widehat Q_n | \Big) \le \widehat Q_n   \Big(   |w-v |+ |\widehat w-\widehat v |\Big).\nn
		\end{align}
		{\bf Case 3}.   If $\max\{v, \widehat v \} < -\widehat Q_n <  \widehat Q_n \le \max\{w, \widehat w \} $ then
		\begin{align*}
			\Big| 	 {\bf \overline F^0}_{\widehat Q_n}(v,\widehat v)-  {\bf \overline F^0}_{\widehat Q_n}(w,\widehat w)\Big| &=  \Big| \widehat Q_n^2- \widehat Q_n^2     \Big|=0.
		\end{align*}
		{\bf Case 4}.  If $ -\widehat Q_n<\max\{v, \widehat v \},~  \max\{w, \widehat w \}  \le \widehat Q_n$ then
		\begin{align*}
			\Big| 	 {\bf \overline F^0}_{\widehat Q_n}(v,\widehat v)-  {\bf \overline F^0}_{\widehat Q_n}(w,\widehat w)\Big| &=  \Big| 	 v \widehat v-  w \widehat w\Big|= \Big| 	 (v-w) \widehat v +  w (\widehat v- \widehat w)\Big|\nn\\
			&\le |\widehat v| |v-w|+ |w| |\widehat v-\widehat w|
			\le \widehat Q_n   \Big(   |v- \widehat v |+ | w-\widehat w |\Big).
		\end{align*}
		{\bf Case 5}.  If $\max\{v, \widehat v \} > \widehat Q_n$ and  $\max\{w, \widehat w \} > \widehat Q_n$ then	\begin{align*}
			\Big| 	 {\bf \overline F^0}_{\widehat Q_n}(v,\widehat v)-  {\bf \overline F^0}_{\widehat Q_n}(w,\widehat w)\Big| &=  \Big| \widehat Q_n^2- \widehat Q_n^2     \Big|=0.
		\end{align*}
		By all  cases above, we complete the proof of Lemma \eqref{lemma5.2}.
	\end{proof}
	
	\section{ Regularized solutions for backward problem for  Burgers' equation}\label{section3-main}
	
	Our main result in this paper is stated as follows
	
	\begin{theorem} \label{burger}
		Let  the functions
		$H \in \mathcal{H}^{\mu_0}(\Omega) $ and $A, G \in L^\infty (0,T;\mathcal{H}^{\mu_0}(\Omega) )$, for $\mu_0 > \frac{1}{2}$.  Then problem \eqref{reguburger}  has unique solution ${\widetilde U_{\rho_n} } \in C([0,T];L^2(\Omega))$.
		Assume that Problem \eqref{problem4444} has unique solution ${\bf u} \in \kg $ . 	
		Let us choose $\widehat Q_{ n}$ such that
		\begin{equation}
			\lim_{n \to +\infty} \exp \Big(\frac{16 |\widehat Q_n|^2T}{A_1-A_0} \Big)    \max \Bigg( e^{2 \rho_n T}    \beta_{ n}^{1/2} n^{-4\mu },   e^{2 \rho_n T} \beta_{ n}^{-\mu_0}  , \rho_n^{-2\gamma}  \Bigg)  =0.
		\end{equation}
		Then for $n$ large enough,
		${\bf E}\|\widetilde U_{\rho_n,\beta_n}(x,t)-\textbf{u}(x,t)\|^2_{L^2(\Omega)}     $ is of order
		\begin{equation}
			\exp \Big(\frac{16 |\widehat Q_n|^2T}{A_1-A_0} \Big) e^{-2 \kappa_n t}  \max \Bigg( e^{2 \rho_n T}    \beta_{ n}^{1/2} n^{-4\mu },   e^{2 \rho_n T} \beta_{ n}^{-\mu_0}  , \rho_n^{-2\gamma}  \Bigg).
		\end{equation}
	\end{theorem}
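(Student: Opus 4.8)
The plan is to combine three ingredients: (i) an existence/uniqueness argument for the regularized problem \eqref{reguburger} via a Picard-type fixed-point scheme in $C([0,T];L^2(\Omega))$, using the global Lipschitz property of $\overline{\bf F}^0_{\widehat Q_n}$ from Lemma \eqref{lemma5.2} and the boundedness estimate \eqref{estimateP1} for ${\bf P}_{\rho_n}$; (ii) an energy estimate for the difference $w := \widetilde U_{\rho_n,\beta_n} - {\bf u}$, exploiting the coercivity gap $A_1 > A_0$ to absorb the "bad" term $({\bf P} - {\bf P}_{\rho_n})$; and (iii) the probabilistic data-approximation bounds of Theorem \eqref{theorem2.1}/Corollary \eqref{corollary2.1}, together with the analogous bound for $\widehat A_{\beta_n} - A$, to control the resulting source terms in expectation. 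I would first write \eqref{reguburger} in mild/integral form in terms of the semigroup generated by $-(\widehat A_{\beta_n}(x,t)\cdot_x)_x + {\bf P} - {\bf P}_{\rho_n}$ run backward from $t=T$; because the final-value problem for a forward-parabolic operator is the culprit, one multiplies by the regularizing factor and the term $-{\bf P}v + {\bf P}_{\rho_n}v$ is exactly what tames the high frequencies. Contraction on a short interval plus the global (not merely local) Lipschitz bound $\widehat Q_n$ gives a solution on all of $[0,T]$, with the Lipschitz constant entering exponentially — this is the source of the $\exp(16|\widehat Q_n|^2 T/(A_1-A_0))$ prefactor.

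Second, for the convergence rate I would set $w = \widetilde U_{\rho_n,\beta_n} - {\bf u}$ and derive the PDE it satisfies: the left side is $w_t - (\widehat A_{\beta_n} w_x)_x - {\bf P}w + {\bf P}_{\rho_n}w$, and the right side collects (a) the nonlinearity difference $\overline{\bf F}^0_{\widehat Q_n}(\widetilde U,\widetilde U_x) - {\bf u}{\bf u}_x$, which by the choice $\widehat Q_n \ge \max(\|{\bf u}\|,\|{\bf u}_x\|)$ and Lemma \eqref{lemma5.2} is bounded by $\widehat Q_n(|w| + |w_x|)$ up to an error from replacing ${\bf u}{\bf u}_x$ by $\overline{\bf F}^0_{\widehat Q_n}({\bf u},{\bf u}_x)$ (which vanishes for $n$ large); (b) the operator-coefficient error $\big((\widehat A_{\beta_n} - A){\bf u}_x\big)_x$; (c) the truncation error $({\bf P} - {\bf P}_{\rho_n}){\bf u}$, estimated by \eqref{estimateP2} as $A_1\rho_n^{-\gamma}e^{-T\rho_n}\|{\bf u}\|_{\mathcal Z_{\gamma,TA_1}}$ since ${\bf u}(\cdot,t)$ lies in that space by hypothesis; and (d) the data error $\widehat G_{\rho_n} - G$. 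Multiplying by $w$ and integrating in $x$, the term $-\int (\widehat A_{\beta_n} w_x)_x w = \int \widehat A_{\beta_n} |w_x|^2 \ge 0$ is good; the term $\int({\bf P}_{\rho_n}w)w$ only adds $\rho_n\|w\|^2$ by \eqref{estimateP1}; and the dangerous term $-\int({\bf P}w)w = \int A_1|w_x|^2 \cdot(\text{high modes})$ has the right sign in a suitably weighted sense — the key algebraic cancellation is that $-{\bf P}w + {\bf P}_{\rho_n}w$ acting on $w$ is nonpositive on all frequencies. The gradient terms $\widehat Q_n|w_x|\cdot|w|$ and $(\widehat A_{\beta_n}-A){\bf u}_x$ differentiated are handled by Cauchy–Schwarz and Young's inequality, feeding $\epsilon\|w_x\|^2 + C_\epsilon\widehat Q_n^2\|w\|^2$ into the estimate and absorbing $\epsilon\|w_x\|^2$ into $\int\widehat A_{\beta_n}|w_x|^2 \ge (A_1 - A_0)\|w_x\|^2$ — this is where the gap $A_1 - A_0$ and the constant $16/(A_1-A_0)$ appear.

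Third, assembling the differential inequality $\frac{d}{dt}\|w(t)\|^2 \ge -C(\widehat Q_n^2 + \rho_n)\|w(t)\|^2 - (\text{source terms})$ and integrating backward from $t=T$ (where $\|w(T)\|^2 = \|\widehat H_{\beta_n} - H\|^2$), Grönwall's lemma in the backward direction yields
\[
\|w(t)\|^2 \le e^{C(\widehat Q_n^2+\rho_n)(T-t)}\Big(\|\widehat H_{\beta_n}-H\|^2 + \int_t^T (\text{source terms})\,ds\Big),
\]
and taking ${\bf E}$ and invoking Corollary \eqref{corollary2.1} for $\|\widehat H_{\beta_n}-H\|$, $\|\widehat G_{\rho_n}-G\|$, $\|\widehat A_{\beta_n}-A\|$ (of order $\max(\beta_n^{1/2}n^{-4\mu},\beta_n^{-\mu_0})$, picking up $e^{2\rho_n T}$ when these are pushed through ${\bf P}_{\rho_n}$ or differentiated against the high modes), plus \eqref{estimateP2} for the $\rho_n^{-2\gamma}$ piece, gives the stated order. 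The hypothesis on $\widehat Q_n$ guarantees the whole bound tends to $0$. \textbf{The main obstacle} I anticipate is the careful bookkeeping of the gradient term $w_x$: one must get a genuine parabolic energy estimate (controlling $\int_0^T\|w_x\|^2$) rather than just an $L^2$-in-$x$ estimate, because both the Lipschitz bound on $\overline{\bf F}^0_{\widehat Q_n}$ and the coefficient-error term involve $w_x$; the saving grace is precisely the uniform ellipticity $\widehat A_{\beta_n} \ge A_1 - A_0 > 0$ on the truncated frequency range, which must be verified from \eqref{randomly-perturbed-operator} and the assumption $\widehat A_{\rho_n}, A \le A_0 < A_1$, and it is the interplay of that lower bound with Young's inequality that fixes the exponent $16|\widehat Q_n|^2T/(A_1-A_0)$.
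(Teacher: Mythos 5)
Your proposal follows essentially the same route as the paper's proof: rewrite the equation by adding and subtracting $A_1\Delta$ so the elliptic coefficient becomes $A_1-A(x,t)$, derive an $L^2$ energy identity for the (exponentially weighted) difference, estimate the resulting error terms using Lemma \ref{lemma5.2}, \eqref{estimateP1}, \eqref{estimateP2} and Young's inequality with the gap $A_1-A_0$ absorbing the gradient contributions, and conclude via Gronwall together with Theorem \ref{theorem2.1}. The only slip is in your final paragraph: the coercive coefficient is $A_1-\widehat A_{\beta_n}\ge A_1-A_0$ (which follows from $\widehat A_{\beta_n}\le A_0<A_1$), not "$\widehat A_{\beta_n}\ge A_1-A_0$" as written, but this does not affect the argument, which otherwise matches the paper's.
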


	\begin{proof}
		Denote by
		\begin{align}
			B(x,t)=A_1 -A(x,t), \quad \overline B_{\beta_n}(x,t)= A_1-  \overline A_{\beta_n}(x,t).
		\end{align}
		The first equation of Problem \eqref{problem4444} can be  written as
		\begin{align}
			\frac{\partial {\bf u} }{\partial t} + \Big( \overline B_{\beta_n}(x,t) \frac{\partial {\bf u} }{\partial x}\Big)_x  &= {\bf u} {\bf u}_x+ \Big( \left(\overline B_{\beta_n}(x,t)- B(x,t)\right)\frac{\partial {\bf u} }{\partial x}\Big)_x \nn\\
			&+ A_1 \Delta {\bf u}+ G(x,t)
		\end{align}
		and 	the first equation of Problem \eqref{reguburger}  is rewritten as
		\begin{align}
			\frac{\partial \widetilde U_{\rho_n,\beta_n} }{\partial t} + \Big( \overline B_{\beta_n}(x,t) \frac{\partial \widetilde U_{\rho_n,\beta_n}  }{\partial x}\Big)_x & = 	{\bf \overline F^0}_{\widehat Q_n} \left( \widetilde U_{\rho_n,\beta_n}, \frac{\partial \widetilde U_{\rho_n,\beta_n}}{\partial x} \right)+ \Big( \left(\overline B_{\beta_n}(x,t)- B(x,t)\right)\frac{\partial \widetilde U_{\rho_n,\beta_n} }{\partial x}\Big)_x \nn\\
			&+	\mathbb{\bf P}_{n} \widetilde U_{\rho_n,\beta_n} +\overline G_{\rho_n} (x,t).
		\end{align}
		For $\kappa_n>0$, we put $$ \mathbf{Y}_{\rho_n,\beta_n}(x,t)=e^{\kappa_n(t-T)}\Big[ \widetilde U_{\rho_n,\beta_n}(x,t)-\textbf{u}(x,t)\Big].$$
 Then the last two equations,  and  a simple computation gives
		\begin{eqnarray}
			\begin{aligned}
				\frac{\partial \mathbf{Y}_{\rho_n,\beta_n}}{\partial t} & + \Big(\overline B_{\rho_n} \frac{\partial \mathbf{Y}_{\rho_n,\beta_n} }{\partial x}\Big)_x-\kappa_n \mathbf{Y}_{\rho_n,\beta_n} \nn\\
				&=
				\mathbb{\bf P}_{n} \mathbf{Y}_{\rho_n,\beta_n}
				- e^{\kappa_n(t-T)} \left( {\bf P}_{\rho_n}-{\bf P} \right)  \textbf{u} - e^{\kappa_n(t-T)}\Big( \left(\overline B_{\beta_n}(x,t)- B(x,t)\right)\frac{\partial {\bf u} }{\partial x}\Big)_x \nn\\
				&\quad + e^{\kappa_n (t-T)}\lf[	{\bf \overline F^0}_{\widehat Q_n} \left( \widetilde U_{\rho_n,\beta_n}, \frac{\partial \widetilde U_{\rho_n,\beta_n}}{\partial x} \right)- {\bf u} {\bf u}_x\rt]  + e^{\kappa_n (t-T)}\lf[\overline G_{\beta_n}(x,t))- G(x,t)\rt]
			\end{aligned}
		\end{eqnarray}
		and $\mathbf{Y}_{\rho_n,\beta_n}|_{\partial \Omega}=0, ~ \mathbf{Y}_{\rho_n,\beta_n}(x,T)=\overline H_{\beta_n}(x)- H(x) $.\\
		By taking the inner product of  the  two sides of the last equality with $\mathbf{Y}_{\rho_n,\beta_n}$ and noting the  equality $$\int_\Omega  \Big(\overline B_{\beta_n} \frac{\partial \mathbf{Y}_{\rho_n,\beta_n} }{\partial x}\Big)_x \mathbf{Y}_{\rho_n,\beta_n} dx=-\int_\Omega \overline B_{\beta_n}(x,t) \Big|\frac{\partial \mathbf{Y}_{\rho_n,\beta_n} }{\partial x}\Big|^2 dx,$$ one deduces that
		\begin{eqnarray}
			\begin{aligned} \label{3J}
				\frac{1}{2} \frac{d}{dt}  \|\mathbf{Y}_{\rho_n,\beta_n}(\cdot, t)\|^2_{L^2(\Omega)} &- \int_\Omega \overline B_{\beta_n}(x,t) \Big|\frac{\partial \mathbf{Y}_{\rho_n,\beta_n} }{\partial x}\Big|^2 dx-\kappa_n \|\mathbf{Y}_{\rho_n,\beta_n}(\cdot, t)\|^2_{L^2(\Omega)}\nn\\
				& = \underbrace{ \Big<	\mathbb{\bf P}_{n} \mathbf{Y}_{\rho_n,\beta_n}, \mathbf{Y}_{\rho_n,\beta_n} \big>_{L^2(\Omega)}}_{=:\widetilde{\mathcal{J}}_{12,n}} + \underbrace{ \Big< e^{\kappa_n(t-T)} \left( {\bf P}_{\rho_n}-{\bf P} \right)  \textbf{u}, \mathbf{Y}_{\rho_n,\beta_n} \Big>_{L^2(\Omega)}}_{=:\widetilde{\mathcal{J}}_{13,n}} \nn\\
				&+ \underbrace{\Big< - e^{\kappa_n(t-T)}\Big( \left(\overline B_{\beta_n}(x,t)- B(x,t)\right)\frac{\partial {\bf u} }{\partial x}\Big)_x, \mathbf{Y}_{\rho_n,\beta_n} \Big>_{L^2(\Omega)}}_{=:\widetilde{\mathcal{J}}_{14,n}} \nn\\
				&+\underbrace{\Big< e^{\kappa_n (t-T)}\lf[	{\bf \overline F^0}_{\widehat Q_n} \left( \widetilde U_{\rho_n,\beta_n}, \frac{\partial \widetilde U_{\rho_n}}{\partial x} \right)- {\bf u} {\bf u}_x\rt] , \mathbf{Y}_{\rho_n,\beta_n} \Big>_{L^2(\Omega)}}_{=:\widetilde{\mathcal{J}}_{15,n}}\nn\\
				&+\underbrace{\Big<e^{\kappa_n (t-T)}\lf[\overline G_{\beta_n}(x,t))- G(x,t)\rt], \mathbf{Y}_{\rho_n,\beta_n} \Big>_{L^2(\Omega)}}_{=:\widetilde{\mathcal{J}}_{16,n}}	.
			\end{aligned}
		\end{eqnarray}
		For   $\widetilde{\mathcal{J}}_{12,n}$, we have the following
		\begin{align} \label{J12}
			\big|\widetilde{\mathcal{J}}_{12,n} \big| &\leq \| \mathbb{\bf P}_{n} \mathbf{Y}_{\rho_n}\|_{L^2(\Omega)}  \| \mathbf{Y}_{\rho_n} (\cdot,t) \|_{L^2(\Omega)}\le  \rho_n \| \mathbf{Y}_{\rho_n} (\cdot,t) \|_{L^2(\Omega)}^2,
		\end{align}
		where we used inequality \eqref{estimateP1}.
		And for $\widetilde{\mathcal{J}}_{13,n}$, using Cauchy-Schwartz and \eqref{estimateP2}, we have the following upper bound
		\begin{align} \label{J13}
			\big|\widetilde{\mathcal{J}}_{13,n}\big|
			& \leq \frac{1}{2} e^{2\kappa_n(t-T)}  A_1^2  \rho_n^{-2\gamma}e^{-2T\rho_n } \|\textbf{u}\|_{\kg}^2  +\frac{1}{2} \|\mathbf{Y}_{\rho_n,\beta_n}(\cdot,t)\|^2_{L^2(\Omega)}.
		\end{align}
		The  Cauchy-Schwartz inequality leads to the following estimation
		\begin{eqnarray}
			\begin{aligned} \label{J3}
				\big|\widetilde{\mathcal{J}}_{14,n}\big| &=\lf|\Big< - e^{\kappa_n(t-T)}\Big( \left(\overline B_{\beta_n}(x,t)- B(x,t)\right)\frac{\partial {\bf u} }{\partial x}\Big)_x, \mathbf{Y}_{\rho_n,\beta_n} \Big>_{L^2(\Omega)}\rt| \nn\\
				&=\lf|\Big<- e^{\kappa_n(t-T)}\Big( \left(\overline B_{\beta_n}(x,t)- B(x,t)\right)\frac{\partial {\bf u} }{\partial x}\Big), \frac{\partial \mathbf{Y}_{\rho_n,\beta_n} }{\partial x} \Big>_{L^2(\Omega)}\rt| \nn\\
				&\leq \frac{ e^{2\kappa_n(t-T)}}{2(A_1-A_0)}\|\overline B_{\rho_n}(.,t)- B(.,t)\|_{L^2(\Omega)}^2\lf\| \frac{\partial {\bf u} }{\partial x} (\cdot,t) \rt\|_{L_2(\Omega)}^2 + \frac{A_1-A_0}{2}  \int_{\Omega}  \lf|\frac{\partial \mathbf{Y}_{\rho_n,\beta_n} }{\partial x}\rt|^2 dx\nn\\
				&\le \frac{ \|\overline B_{\beta_n}(.,t)- B(.,t)\|_{L^2(\Omega)}^2}{2(A_1-A_0)}\lf\| \textbf{u} (\cdot,t) \rt\|_{H_0^1(\Omega)}^2 + \frac{ A_1-A_0}{2}  \left\| \frac{\partial \mathbf{Y}_{\rho_n,\beta_n} }{\partial x} \right\|_{L^2(\Omega)}^2.
			\end{aligned}
		\end{eqnarray}
		For  $\widetilde{\mathcal{J}}_{15,n} $ , we note that  ${\bf \overline F^0}_{\widehat Q_n}({\bf u}, {\bf u}_x)= {\bf u}{\bf u}_x $ and
		thanks to \eqref{F}, we obtain
		\begin{eqnarray}
			\begin{aligned}
				\Big\|  	{\bf \overline F^0}_{\widehat Q_n} \left( \widetilde U_{\rho_n,\beta_n}, \frac{\partial \widetilde U_{\rho_n,\beta_n}}{\partial x} \right)-   {\bf u}{\bf u}_x \Big\|_{L^2(\Omega)}
				&= \Big\|  	{\bf \overline F^0}_{\widehat Q_n} \left( \widetilde U_{\rho_n,\beta_n}, \frac{\partial \widetilde U_{\rho_n,\beta_n}}{\partial x} \right)-  {\bf \overline F^0}_{\widehat Q_n}({\bf u}, {\bf u}_x) \Big\|_{L^2(\Omega)}\nn\\
				&\le  \widehat Q_n \Big(  \left\| \widetilde U_{\rho_n,\beta_n}-{\bf u}  \right\|_{L^2(\Omega)} +\left\| \frac{\partial \widetilde U_{\rho_n,\beta_n}}{\partial x}-{\bf u}_x  \right\|_{L^2(\Omega)} \Big)\nn\\
				&= e^{\kappa_n(T-t)} \widehat Q_n  \Big( \left\| \mathbf{Y}_{\rho_n,\beta_n} \right\|_{L^2(\Omega)} +\left\| \frac{\partial \mathbf{Y}_{\rho_n,\beta_n}}{\partial x} \right\|_{L^2(\Omega)} \Big) \nn\\
				&\le 2e^{\kappa_n(T-t)} \widehat Q_n \left\| \frac{\partial \mathbf{Y}_{\rho_n,\beta_n}}{\partial x} \right\|_{L^2(\Omega)},
			\end{aligned}
		\end{eqnarray}
		where we note that $\left\| \mathbf{Y}_{\rho_n,\beta_n} \right\|_{L^2(\Omega)}  \le \left\| \frac{\partial \mathbf{Y}_{\rho_n,\beta_n}}{\partial x} \right\|_{L^2(\Omega)} $.
		This implies that
		\begin{eqnarray}
			\begin{aligned} \label{J4}
				\big|\widetilde{\mathcal{J}}_{15,n}\big|&= \Big| \Big< e^{\kappa_n (t-T)}\lf[	{\bf \overline F^0}_{\widehat Q_n} \left( \widetilde U_{\rho_n,\beta_n}, \frac{\partial \widetilde U_{\rho_n,\beta_n}}{\partial x} \right)- {\bf u} {\bf u}_x\rt] , \mathbf{Y}_{\rho_n,\beta_n} \Big>_{L^2(\Omega)}\Big|\nn\\
				&\le e^{2\kappa_n (t-T)} \frac{A_1-A_0}{8|\widehat Q_n|^2} \Big\|  	{\bf \overline F^0}_{\widehat Q_n} \left( \widetilde U_{\rho_n,\beta_n}, \frac{\partial \widetilde U_{\rho_n,\beta_n}}{\partial x} \right)-   {\bf u}{\bf u}_x \Big\|_{L^2(\Omega)}^2+\frac{8 |\widehat Q_n|^2}{A_1-A_0}\|\mathbf{Y}_{\rho_n,\beta_n}(\cdot,t)\|^2_{L^2(\Omega)}\nn\\
				&\le \frac{ A_1-A_0}{2}  \left\| \frac{\partial \mathbf{Y}_{\rho_n,\beta_n}}{\partial x} \right\|_{L^2(\Omega)}^2+\frac{8 |\widehat Q_n|^2}{A_1-A_0}\|\mathbf{Y}_{\rho_n,\beta_n}(\cdot,t)\|^2_{L^2(\Omega)}.
			\end{aligned}
		\end{eqnarray}
		The term $\big|\widetilde{\mathcal{J}}_{16,n}\big|$ can be bounded by
		\begin{eqnarray}
			\begin{aligned} \label{J4}
				\big|\widetilde{\mathcal{J}}_{16,n}\big|&= \Big| \Big<e^{\kappa_n (t-T)}\lf[\overline G_{\beta_n}(.,t))- G(.,t)\rt], \mathbf{Y}_{\rho_n,\beta_n} \Big>_{L^2(\Omega)}\Big|\nn\\
				& \leq \frac{1}{2}e^{2\kappa_n (t-T)} \lf\|\overline G_{\beta_n}(.,t))- G(.,t)\rt\|^2_{L^2(\Omega)}+ \frac{1}{2}\|\mathbf{Y}_{\rho_n,\beta_n}\|_{L^2(\Omega)}.
			\end{aligned}
		\end{eqnarray}
		Combining all the previous estimates, we get
		\begin{eqnarray}
			\begin{aligned}
				&\frac{d}{dt}  \|\mathbf{Y}_{\rho_n,\beta_n}(\cdot, t)\|^2_{L^2(\Omega)} - 2\int_\Omega \overline B_{\beta_n}(x,t) \Big|\frac{\partial \mathbf{Y}_{\rho_n,\beta_n} }{\partial x}\Big|^2 dx-2\kappa_n \|\mathbf{Y}_{\rho_n,\beta_n}(\cdot, t)\|^2_{L^2(\Omega)}\nn\\
				&\ge -2 \rho_n \| \mathbf{Y}_{\rho_n,\beta_n} (\cdot,t) \|_{L^2(\Omega)}^2- e^{2\kappa_n(t-T)}  A_1^2  \rho_n^{-2\gamma}e^{-2T\rho_n } \|\textbf{u}\|_{\kg}^2 \nn\\
				&  - \|\mathbf{Y}_{\rho_n,\beta_n}(\cdot,t)\|^2_{L^2(\Omega)}
				-\frac{ \|\overline B_{\beta_n}(.,t)- B(.,t)\|_{L^2(\Omega)}^2}{(A_1-A_0)}\lf\| \textbf{u} (\cdot,t) \rt\|_{H_0^1(\Omega)}^2 \nn\\
				&- 2( A_1-A_0)  \left\| \frac{\partial \mathbf{Y}_{\rho_n,\beta_n}}{\partial x} \right\|_{L^2(\Omega)}^2-\frac{16 |\widehat Q_n|^2}{A_1-A_0}\|\mathbf{Y}_{\rho_n,\beta_n}(\cdot,t)\|^2_{L^2(\Omega)}\nn\\
				&-e^{2\kappa_n (t-T)} \lf\|\overline G_{\beta_n}(.,t))- G(.,t)\rt\|^2_{L^2(\Omega)}- \|\mathbf{Y}_{\rho_n,\beta_n}\|^2_{L^2(\Omega)}.
			\end{aligned}
		\end{eqnarray}
		By taking the integral from $t$ to $T$ and by a simple calculation yields
		\begin{eqnarray}
			\begin{aligned}
				&\|\mathbf{Y}_{\rho_n,\beta_n}(\cdot, T)\|^2_{L^2(\Omega)}  -\|\mathbf{Y}_{\rho_n,\beta_n}(\cdot, t)\|^2_{L^2(\Omega)} \nn\\
				&+ \int_t^T \lf( A_1^2  \rho_n^{-2\gamma}e^{-2T\rho_n } \|\textbf{u}\|_{\kg}^2 +  \frac{ \|\overline B_{\beta_n}(x,t)- B(x,t)\|_{L^2(\Omega)}^2}{(A_1-A_0)} \lf\| \textbf{u}  \rt\|_{L^\infty((0,T);H_0^1(\Omega))}^2 \rt) ds \nn\\
				&\geq 2 \int_t^T \int_\Omega \Big(\overline B_{\rho_n}(x,s)-(A_1-A_0)\Big) \left\| \frac{\partial \mathbf{Y}_{\rho_n,\beta_n}}{\partial x} \right\|_{L^2(\Omega)}^2 dx  ds \nn\\
				&\quad  +\int_t^T \lf(2\kappa_n- 2\rho_n- \frac{16 |\widehat Q_n|^2}{A_1-A_0} -2 \rt)\|\mathbf{Y}_{\rho_n,\beta_n}(\cdot, s) \|^2_{L^2(\Omega)} ds\nn\\
				&\quad \quad \quad \quad \quad-Te^{2\kappa_n (t-T)} \lf\|\overline G_{\beta_n}(.,t))- G(.,t)\rt\|^2_{L^\infty(0,T;L^2(\Omega))}\nn\\
				&\geq \int_t^T \lf(2\kappa_n-2 \rho_n- \frac{16 |\widehat Q_n|^2}{A_1-A_0}-2 \rt)\|\mathbf{Y}_{\rho_n,\beta_n}(\cdot, s) \|^2_{L^2(\Omega)} ds\nn\\
				&\quad -Te^{2\kappa_n (t-T)} \lf\|\overline G_{\beta_n}(.,t))- G(.,t)\rt\|^2_{L^\infty(0,T;L^2(\Omega))}.
			\end{aligned}
		\end{eqnarray}
		where we used the fact that  $$\overline B_{\beta_n}(x,s) = A_1- \overline A_{\rho_n}(x,s) \ge A_1-A_0.$$
		Let us choose $\kappa_n=\rho_n$ then we
		obtain
		\begin{eqnarray}
			\begin{aligned}
				&e^{2 \kappa_n (t-T)}	{\bf E}\|\widetilde U_{\rho_n,\beta_n}(.,t)-\textbf{u}(.,t)\|^2_{L^2(\Omega)}   \nn\\
				&\quad \quad \quad \le 	{\bf E} \| \overline H_{\beta_n}- H \|^2_{L^2(\Omega)}+ T A_1^2  \rho_n^{-2\gamma}e^{-2T\rho_n } \|\textbf{u}\|_{\kg}^2\nn\\
				&\quad \quad \quad+T{\bf E} \lf\|\overline G_{\beta_n}(.,t))- G(.,t)\rt\|^2_{L^\infty(0,T;L^2(\Omega))}  \nn\\
				&\quad \quad \quad+  \frac{ {\bf E} \|\overline B_{\beta_n}(.,t)- B(.,t)\|_{L^\infty(0,T;L^2(\Omega))}^2}{(A_1-A_0)} \lf\| \textbf{u}  \rt\|_{L^\infty((0,T);H_0^1(\Omega))}^2  \nn\\
				&\quad \quad \quad + \Big(\frac{16 |\widehat Q_n|^2}{A_1-A_0} +2\Big) \int_t^T e^{2 \kappa_n (s-T)}	{\bf E}\|\widetilde U_{\rho_n,\beta_n}(.,s)-\textbf{u}(.,s)\|^2_{L^2(\Omega)}ds .
			\end{aligned}
		\end{eqnarray}
		Multiplying both sides of the last inequality by $e^{2 \kappa_n T}$, we obtain
		\begin{eqnarray}
			\begin{aligned}
				&e^{2 \kappa_n t}	{\bf E}\|\widetilde U_{\rho_n,\beta_n}(.,t)-\textbf{u}(.,t)\|^2_{L^2(\Omega)}   \nn\\
				& \le 	e^{2 \kappa_n T}{\bf E} \| \overline H_{\beta_n}- H \|^2_{L^2(\Omega)}+ T A_1^2  \rho_n^{-2\gamma}  \|\textbf{u}\|_{\kg}^2\nn\\
				&+T e^{2 \kappa_n T} {\bf E} \lf\|\overline G_{\beta_n}- G\rt\|^2_{L^\infty(0,T;L^2(\Omega))}  \nn\\
				&+ e^{2 \kappa_n T} \frac{ {\bf E} \|\overline B_{\beta_n}- B\|_{L^\infty(0,T;L^2(\Omega))}^2}{(A_1-A_0)} \lf\| \textbf{u}  \rt\|_{L^\infty((0,T);H_0^1(\Omega))}^2  \nn\\
				& + \Big(\frac{16 |\widehat Q_n|^2}{A_1-A_0} +2\Big) \int_t^T e^{2s \kappa_n }	{\bf E}\|\widetilde U_{\rho_n,\beta_n}(.,s)-\textbf{u}(.,s)\|^2_{L^2(\Omega)}ds .
			\end{aligned}
		\end{eqnarray}
		Applying Gronwall's inequality, we deduce that
		\begin{eqnarray}
			\begin{aligned}
				&{\bf E}\|\widetilde U_{\rho_n,\beta_n}(x,t)-\textbf{u}(x,t)\|^2_{L^2(\Omega)}  \le \exp \Big(\frac{16 |\widehat Q_n|^2(T-t)}{A_1-A_0} +2(T-t)\Big) e^{-2 \kappa_n t} B'  \label{B}
			\end{aligned}
		\end{eqnarray}
		where
		\begin{eqnarray}
			\begin{aligned}
				B'&= e^{2 \kappa_n T}{\bf E} \| \overline H_{\beta_n}- H \|^2_{L^2(\Omega)}+ T A_1^2  \rho_n^{-2\gamma}  \|\textbf{u}\|_{\kg}^2 \nn\\
				&+T e^{2 \kappa_n T} {\bf E} \lf\|\overline G_{\beta_n}- G\rt\|^2_{L^\infty(0,T;L^2(\Omega))}+e^{2 \kappa_n T} \frac{ {\bf E} \|\overline B_{\beta_n}- B\|_{L^\infty(0,T;L^2(\Omega))}^2}{(A_1-A_0)} \lf\| \textbf{u}  \rt\|_{L^\infty((0,T);H_0^1(\Omega))}^2.
			\end{aligned}
		\end{eqnarray}
		Thanks to  Theorem \ref{theorem2.1}, we have that
		$$
		{\bf E}	\Big\| \widehat H_{\beta_{\bf n}}-H \Big\|_{L^2(\Omega)}^2 +T {\bf E}  \Big\| \widehat G_{\beta_{\bf n}}-G \Big\|_{L^\infty(0,T;L^2(\Omega))}^2+ {\bf E} \|\overline B_{\beta_n}- B\|_{L^\infty(0,T;L^2(\Omega))}^2
		$$
		is of order
		$ \max \Big( \beta_{ n}^{1/2} n^{-4\mu },  \beta_{ n}^{-\mu_0}   \Big) $ for any $\mu > \frac{1}{2}$. This together with \eqref{B} implies that ${\bf E}\|\widetilde U_{\rho_n,\beta_n}(.,t)-\textbf{u}(.,t)\|^2_{L^2(\Omega)}     $ is of order
		\begin{equation}
			\exp \Big(\frac{16 |\widehat Q_n|^2T}{A_1-A_0} \Big) e^{-2 \kappa_n t}   \max \Bigg( e^{2 \rho_n T}    \beta_{ n}^{1/2} n^{-4\mu },   e^{2 \rho_n T} \beta_{ n}^{-\mu_0}  , \rho_n^{-2\gamma}  \Bigg).
		\end{equation}
		
	\end{proof}

\end{document}